\documentclass[letterpaper, 12pt]{article}
\usepackage[margin=1in]{geometry}
\usepackage{amsmath,amsthm,amssymb}
\usepackage[longnamesfirst]{natbib}
\bibpunct[ ]{(}{)}{,}{a}{}{,}

\theoremstyle{plain}
\newtheorem{theorem}{Theorem}
\newtheorem{lemma}[theorem]{Lemma}
\newtheorem{proposition}[theorem]{Proposition}
\theoremstyle{definition}

\theoremstyle{remark}
\newtheorem{remark}{Remark}

\title{The Efron-Stein inequality for identically distributed pairs}
\author{Jnaneshwar Baslingker \and B\'alint Vir\'ag}

\begin{document}
\maketitle
\begin{abstract}
We prove that the classical Efron--Stein inequality holds for independent exchangeable pairs \((X_i,Y_i)\). The same inequality fails for independent identically distributed pairs; a simple trigonometric counterexample shows that the trivial Cauchy--Schwarz bound of factor \(n\) is sharp. When each random variable takes at most \(k_i\) values, a useful bound still holds with explicit constant \(\rho(k)\le\max_i k_i/2\).
\end{abstract}

\section{Introduction}

The classical Efron--Stein inequality \cite{efron1981jackknife,steele1986efron} bounds the variance of a function of independent random variables by a sum of squared differences obtained by replacing one variable at a time with an independent copy. It has become a fundamental tool in concentration of measure, influence theory, and the analysis of algorithms.

In this note we study a natural variant: instead of independent copies we are given, for each coordinate \(i\), an exchangeable pair \((X_i,Y_i)\) (i.e., \((X_i,Y_i)\stackrel{d}{=}(Y_i,X_i)\)) and the pairs are independent across \(i\). We show that the Efron--Stein inequality continues to hold in this setting (Theorem~\ref{thm: Efron-Stein for exchangeable pair}). The proof reduces the general exchangeable case to the classical Rademacher case by conditioning on the exchangeable \(\sigma\)-fields and a simple truncation argument.

When the pairs are merely identically distributed (but not exchangeable), the inequality can fail dramatically. We give a sharp counterexample (Proposition~\ref{p:id}) showing that the ratio of the two sides can be as large as the trivial Cauchy--Schwarz bound \(n\). On the positive side, when each variable takes values in a finite set of size at most \(k_i\), we obtain a useful bound whose constant \(\rho(k)\) depends only on the maximal cycle length appearing in the joint laws (Theorem~\ref{thm: finite support}). The proof combines Fourier analysis on cyclic groups with a Krein--Milman decomposition of circulations into cycle flows.

As an illustration we recover a sign-flipping analogue of the Gaussian Poincaré inequality. We also show that, without the identical-distribution assumption, even three-point supports do not improve upon the factor-\(n\) bound.

The exchangeable-pair technique and related concentration inequalities have been developed extensively by Chatterjee \cite{chatterjee2007stein} and are presented in detail in the monograph of Boucheron, Lugosi and Massart \cite{boucheron2013concentration}. The specific strengthening for exchangeable pairs and the finite-support analysis with cycle decomposition presented here appear to be new.

A standard version of the Efron-Stein inequality can be stated as follows.
\begin{theorem}[\cite{efron1981jackknife}, \cite{steele1986efron}]\label{thm: efron-stein}
Let \(X_1,\ldots, X_n,Y_1,\ldots,Y_n\) be \(2n\) independent random variables with \(X_i\stackrel{d}{=} Y_i\) for all \(i\). Let \(f\) be a measurable function, and let \(Z_i=f(X_1\ldots X_{i},Y_{i+1},\ldots Y_n)\) for \(i=0,\dots,n\). Then
$$
E(Z_n-Z_0)^2\le \sum_{i=1}^n E(Z_i-Z_{i-1})^2.
$$
\end{theorem}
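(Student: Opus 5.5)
The plan is the standard martingale decomposition, with an extra trick to handle the fact that here $Z_0=f(Y)$ and $Z_n=f(X)$ both vary. Write $\mathcal F_i=\sigma(X_1,\dots,X_i)$, $\mathcal G_i=\sigma(Y_{i+1},\dots,Y_n)$, and put $D_i=Z_i-Z_{i-1}$, so that $Z_n-Z_0=\sum_i D_i$. Expanding the square gives
$$E\Big(\sum_i D_i\Big)^2=\sum_i ED_i^2+2\sum_{i<j}E D_iD_j .$$
It therefore suffices to show $E D_iD_j\le 0$ for $i<j$; in fact I expect $E D_iD_j=0$.

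To see why, fix $i<j$ and look at how $D_i$ and $D_j$ depend on the coordinates. The variable $D_i$ depends on $X_1,\dots,X_i$, $Y_i$, and $Y_{i+1},\dots,Y_n$. The variable $D_j=f(X_{\le j},Y_{>j})-f(X_{<j},Y_{\ge j})$ depends on $X_1,\dots,X_j$, $Y_j$, and $Y_{>j}$. The key point is that $D_j$ is antisymmetric under swapping $X_j\leftrightarrow Y_j$. Meanwhile $D_i$ does not involve $X_j$ at all, and involves $Y_j$ only as one of its last arguments. So swapping $X_j$ and $Y_j$ changes $D_i$ into a different function, and antisymmetry alone does not kill the cross term. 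That approach fails, so I would use a different decomposition.

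Instead, condition on the coordinates strictly between $i$ and $j$ and apply antisymmetry in coordinate $i$. Swap $X_i\leftrightarrow Y_i$. This map preserves the joint law because $X_i,Y_i$ are i.i.d.\ and independent of everything else. Under it $D_i\mapsto -D_i$. Does $D_j$ change? $D_j$ involves $X_i$ (as one of $X_{<j}$) but not $Y_i$, so it does change. Symmetrizing in both $i$ and $j$ simultaneously gives the same problem.

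The main obstacle is that the cross terms are genuinely not zero pointwise in $f$. The cleaner route is the orthogonal (Efron--Stein/Hoeffding) decomposition. Write $f(x)=\sum_{S\subseteq[n]}f_S(x_S)$ with $E[f_S\mid x_T]=0$ unless $S\subseteq T$. Then compute both sides. Under this decomposition, $Z_n-Z_0=\sum_S\big(f_S(X_S)-f_S(Y_S)\big)$. Terms with different $S$ are orthogonal: each $f_S(X_S)$ and each $f_S(Y_S)$ is orthogonal to every other $f_T(\cdot)$ with $T\ne S$. Mixed products such as $E f_S(X_S)f_T(Y_T)$ vanish unless $S=T=\emptyset$, and the constant cancels. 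Hence
$$E(Z_n-Z_0)^2=2\sum_{S\neq\emptyset}\|f_S\|^2 .$$
On the right, $D_i=\sum_{S\ni i}\big(f_S(X_{S\cap[1,i]},Y_{S\cap(i,n]})-f_S(X_{S\cap[1,i)},Y_{S\cap[i,n]})\big)$. These two arguments differ only in the $i$th coordinate, and each is a product-law input. The two terms have mean-zero product with each other, since $f_S$ integrates to zero in coordinate $i$ and the two evaluations use independent $i$th coordinates. Different $S$ are again orthogonal, so
$$ED_i^2=2\sum_{S\ni i}\|f_S\|^2 .$$
Summing over $i$ gives $2\sum_S|S|\,\|f_S\|^2\ge 2\sum_{S\ne\emptyset}\|f_S\|^2$, which is the claim.

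The step needing the most care is verifying these orthogonality relations for mixed arguments, i.e.\ evaluations of $f_S$ at points whose coordinates are a mix of $X$'s and $Y$'s. Each coordinate $k$ enters as either $X_k$ or $Y_k$. Two evaluations $f_S(U)$ and $f_T(V)$ are uncorrelated whenever some $k\in S\triangle T$ exists, or whenever some $k\in S\cap T$ has $U_k,V_k$ independent copies: in either case, integrating out that coordinate (with all other variables fixed) gives zero by the defining property of the Hoeffding decomposition. I would state this as a one-line lemma and then apply it to both computations above.
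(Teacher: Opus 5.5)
Your final argument via the Hoeffding (ANOVA) decomposition is correct for $f\in L^2(\mu)$, and it takes a genuinely different route from the paper. The paper does not prove this statement separately. It is the special case of Theorem~\ref{thm: Efron-Stein for exchangeable pair} in which $X_i,Y_i$ are independent, hence exchangeable. That proof runs in three steps:
\begin{enumerate}
\item It handles $X_i$ uniform on $\{-1,1\}$ with $Y_i=-X_i$ by the Walsh expansion, where everything reduces to $\mathbf 1_{|s|\text{ odd}}\le |s|$.
\item It conditions on the unordered pairs $\{X_i,Y_i\}$. This turns every exchangeable pair into a conditional random sign flip $X_i=A_i+B_iX_i'$, $Y_i=A_i-B_iX_i'$.
\item It removes boundedness by truncation and monotone convergence.
\end{enumerate}
You instead expand $f$ orthogonally in $L^2$ of the product law $\mu$ and compute both sides exactly: $E(Z_n-Z_0)^2=2\sum_{S\neq\emptyset}\|f_S\|^2$ and $E(Z_i-Z_{i-1})^2=2\sum_{S\ni i}\|f_S\|^2$. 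The key inequality is then $\mathbf 1_{S\neq\emptyset}\le |S|$ in place of the paper's parity inequality.

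This is the classical argument. It is self-contained, needs no special case or conditioning, and identifies the left side as $2\,\mathrm{Var}\,f(X)$. Its limitation is that your orthogonality lemma uses in an essential way that $X_k$ and $Y_k$ are independent. That independence is what kills $E f_S(U)f_S(V)$ when one input uses $X_k$ and the other $Y_k$. So your proof does not reach dependent exchangeable pairs. The paper's conditioning on the exchangeable $\sigma$-field is precisely what buys that extension.

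You need one repair and should make one cleanup. The repair: the Hoeffding decomposition requires $f\in L^2(\mu)$, whereas the statement allows any measurable $f$. Add the reduction. If the right-hand side is infinite there is nothing to prove. Otherwise \eqref{e:CS} gives $E(f(X)-f(Y))^2<\infty$, so by Fubini $\int (f(x)-f(y))^2\,d\mu(x)<\infty$ for $\mu$-a.e.\ $y$, hence $f\in L^2(\mu)$. Alternatively, use the paper's truncation $f_b=(-b\vee f)\wedge b$.

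The cleanup: delete the exploratory first half, because the guess $ED_iD_j=0$ there is false in general. Your own decomposition gives, for $i<j$,
\[
ED_iD_j=-\sum_S\|f_S\|^2\,\mathbf 1[\,i,j\in S,\ S\cap\{i+1,\dots,j-1\}=\emptyset\,]\le 0.
\]
So the cross terms are nonpositive but not zero, which is exactly why your swap-symmetry attempts could not work.
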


Our goal is to prove the following strengthening:
\begin{theorem}\label{thm: Efron-Stein for exchangeable pair}
For each \(i\), let \((X_i,Y_i)\) be an exchangeable pair, so that the pairs \((X_i,Y_i)\) are independent as \(i\) varies. Let \(f\) be a measurable function, and let \(Z_i=f(X_1,\ldots X_{i},Y_{i+1},\ldots Y_n)\). Then
$$
E(Z_n-Z_0)^2\le \sum_{i=1}^n E(Z_i-Z_{i-1})^2.
$$
\end{theorem}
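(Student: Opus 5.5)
The plan is to reduce to the classical Rademacher (sign-flip) case by conditioning on the unordered pairs. For each \(i\), let \(\mathcal G_i\) be the exchangeable \(\sigma\)-field generated by the unordered pair \(\{X_i,Y_i\}\), for instance by the symmetric functions \(g(X_i,Y_i)+g(Y_i,X_i)\). Exchangeability means that, conditionally on \(\mathcal G_i\), the ordered pair is obtained from the unordered one by an independent fair coin flip. Concretely, we will realise \((X_i,Y_i)\) as \((U_i,V_i)\) if \(\varepsilon_i=1\) and \((V_i,U_i)\) if \(\varepsilon_i=-1\). Here \((U_i,V_i)\) is some (possibly non-exchangeable) pair measurable with respect to \(\mathcal G_i\), for example obtained by ordering the pair through a measurable total order after embedding into \(\mathbb R\), and the \(\varepsilon_i\) are i.i.d.\ signs independent of everything. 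Such a representation exists: by a standard Borel reduction we may assume \(X_i,Y_i\) are real-valued, set \(U_i=\min(X_i,Y_i)\), \(V_i=\max(X_i,Y_i)\), and then, on the event \(X_i\ne Y_i\), the law of \((X_i,Y_i)\) given \((U_i,V_i)\) is uniform on the two orderings by exchangeability. On \(\{X_i=Y_i\}\) the sign is irrelevant. Both sides of the inequality depend only on the joint law of the pairs, so we may work with this coupling.

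Next, condition on all of \(U=(U_i)\) and \(V=(V_i)\). Write \(a_i(\varepsilon_i)\) for the value of \(X_i\) and \(b_i(\varepsilon_i)=a_i(-\varepsilon_i)\) for \(Y_i\). Then \(Z_k=h(\varepsilon_1,\dots,\varepsilon_k,-\varepsilon_{k+1},\dots,-\varepsilon_n)\) for the function \(h(s)=f(a_1(s_1),\dots,a_n(s_n))\) on the hypercube \(\{\pm1\}^n\). The inequality thus becomes a statement about a function on the cube: \(Z_0\) and \(Z_n\) are evaluations at antipodal points \(-\varepsilon\) and \(\varepsilon\), and \(Z_{i}-Z_{i-1}\) is the difference of \(h\) across coordinate \(i\) along the path from \(-\varepsilon\) to \(\varepsilon\). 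The claim follows once we show, for \(\varepsilon\) uniform,
\[
E\bigl(h(\varepsilon)-h(-\varepsilon)\bigr)^2\le \sum_{i=1}^n E\bigl(h(\varepsilon^{(\le i)})-h(\varepsilon^{(\le i-1)})\bigr)^2,
\]
where \(\varepsilon^{(\le i)}\) agrees with \(\varepsilon\) in the first \(i\) coordinates and with \(-\varepsilon\) afterwards. The right-hand side equals \(\sum_i E(h(\varepsilon)-h(\varepsilon^{\oplus i}))^2\) with \(\varepsilon^{\oplus i}\) the single flip, because \(\varepsilon^{(\le i)}\) is itself uniform. This is exactly Theorem~\ref{thm: efron-stein} applied to i.i.d.\ Rademacher \(X_i=\varepsilon_i\), \(Y_i=-\varepsilon_i\)? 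Not quite, since those are not independent of each other. So the step will be proved directly by Fourier (Walsh) expansion instead. With \(h=\sum_S \hat h(S)\chi_S\), the left side is \(\sum_{|S| \text{ odd}} 4\hat h(S)^2\) and the right side is \(\sum_S 4|S|\hat h(S)^2\), and \(4\le 4|S|\) for odd \(|S|\). Averaging over \(U,V\) then gives the theorem.

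The main obstacle is integrability and measurability: \(f\) is only measurable, so \(h\) need not be square integrable and the Fourier argument needs finite second moments. Following the outline, I would truncate. Replace \(f\) by \(f_M=(f\wedge M)\vee(-M)\). This is 1-Lipschitz in the value, so each increment \(|f_M(x)-f_M(y)|\le|f(x)-f(y)|\) and the right side only decreases. Apply the bounded case, then let \(M\to\infty\) on the left using Fatou's lemma, since \(f_M(x)-f_M(y)\to f(x)-f(y)\) pointwise. The conditioning step requires a regular conditional distribution, which the Borel reduction provides.
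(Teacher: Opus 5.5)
Your proposal is correct and follows essentially the same route as the paper. You condition on the unordered pairs (your \(U_i,V_i\) are the paper's \(A_i\mp B_i\)) to reduce to the antipodal Rademacher case, where the Walsh expansion turns the inequality into \(\mathbf 1_{|S|\text{ odd}}\le |S|\). The differences are cosmetic and confined to the truncation step: you use Fatou on the left and the contraction bound on the right, where the paper uses monotonicity in \(b\) and monotone convergence. In fact, once \(U,V\) are fixed, \(h\) lives on a finite cube and the conditional inequality can be integrated by Tonelli, so truncation is not strictly needed.
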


The proof is based on \cite{steele1986efron} with some significant modifications.
\begin{proof}
First we consider the special case when all \(X_i\) are uniform on \(\{-1,1\}\) and \(Y_i=-X_i\).

Since \(\{1,X_i\}\) is an orthonormal basis for the law of \(X_i\), \(\{\chi_s=\prod_{i\in s}X_i\,:\,s\subset [n]\}\) forms an orthonormal basis for the joint law of the \(X_i\). So we can write
$$f(X_1,\ldots, X_n)=\sum_{s\subset [n]
}c_s\chi_s$$ for some coefficients \(c_s\). We also have
$$f(Y_1,\ldots, Y_n)=\sum_{s\subset[n]}(-1)^{|s|}c_s\chi_s,$$ and the difference is given by
$$
Z_n-Z_0=2\sum_{s\subset [n],|s| \text{ odd}} c_s\chi_s.
$$
Similarly, we have
$$
Z_{i}-Z_{i-1}\stackrel{d}{=}f(X_1,\ldots,X_n)-f(X_1,\ldots,X_{i-1}, Y_{i},X_{i+1},\ldots X_n)=2\sum_{s\subset [n],i\in s}c_s\chi_s.
$$
Squaring, taking expectations and using orthonormality, the desired inequality reads
$$
2\sum_{s\subset [n],|s| \text{ odd}} c_s^2\le 2\sum_{s\subset [n]}c_s^2|s|
$$
which follows by summing \(2c_s^2\) times the inequality
$$
\mathbf 1_{|s|\text{ odd}}\le |s|
$$
over \(s\subset[n]\).

For general exchangeable pairs \((X_i,Y_i)\), and \(f\) bounded, given the exchangeable \(\sigma\)-field \(\mathcal I_i\) we know the set \(\{X_i,Y_i\}\) but not the order. So we can represent the conditional law of \((X_i,Y_i)\) as \(X_i=A_i+B_iX_i'\), \(Y_i=A_i+B_iY_i'\), where \(A_i=(X_i+Y_i)/2\), \(B_i=|X_i-Y_i|/2\) are \(\mathcal I_i\)-measurable, and \(X_i'=-Y_i'\) are uniform on \(\{-1,1\}\) independent of \(\mathcal I_i\). Conditioning on the \(\sigma\)-field \(\mathcal I\) generated by all the \(\mathcal I_i\), and setting \(f'(x_1,\ldots,x_n)=f(A_1+B_1x_1,\ldots ,A_n+B_nx_n)\), and applying the first part to the primed quantities,
we get
$$
E[(Z_n-Z_0)^2|\mathcal I]\le \sum_{i=1}^n E[(Z_i-Z_{i-1})^2|\mathcal I].
$$
Taking expectations completes the proof.

To handle the unbounded case, note that it holds for \(f_b=(-b \vee f)\wedge b\). Since \(x\mapsto (-b \vee x)\wedge b\) is a contraction with respect to Euclidean distance on \(\mathbb R\), both sides of the inequality are monotone increasing in \(b\). The monotone convergence theorem then implies the general case.
\end{proof}

Let \((Z_1,Z_2,...,Z_n)\) be i.i.d \(N(0,1)\) random variables and \((B_1,B_2,...,B_n)\) are i.i.d (independent of \(Z_i\)) with \(\mathbb{P}(B_1=1)=p\) and \(\mathbb{P}(B_1=-1)=1-p\). Taking \(Y_i=B_iZ_i\) and applying Theorem \ref{thm: Efron-Stein for exchangeable pair} gives,
    \[
    E[(Z_n-Z_0)^2]\leq \sum_{i=1}^n E(Z_i-Z_{i-1})^2
    \]
where \begin{align*}
    E(Z_{n}-Z_{n-1})^2&=(1-p)E[(f(Z_1,Z_2,...,Z_n)-f(Z_1,Z_2,...,-Z_n))^2]\\
    &\leq 4(1-p)E\left[\left(\frac{\partial f}{\partial z_n}\right)^2\right].
\end{align*}
As a result we obtain
\[
E[(Z_n-Z_0)^2]\le 4(1-p)\sum_{i=1}^nE\left[\left(\frac{\partial f}{\partial z_i}\right)^2\right],
\]
which is the sign-flipping analogue of the classical Gaussian Poincaré inequality.

Next, we show that the inequality fails for identically distributed pairs.
\begin{proposition}\label{p:id}
Let \(\epsilon\in \mathbb R\), let \(X_i\) be uniform on \([0,2\pi]\) and let \(Y_i\) equal \(X_i+2\epsilon\) mod \(2\pi\). Then \(X_i\stackrel{d}{=}Y_i\). Assume that the pairs \((X_i,Y_i)\) are independent as \(i\) varies. Let \(f(x_1,\ldots x_n)=\sin(x_1+\ldots +x_n)\), and let \(Z_i=f(X_1,\ldots X_{i},Y_{i+1},\ldots Y_n)\). Then the random variables \(Z_i\) (and hence the ratio on the left-hand side of \eqref{e}) depend on \(\epsilon\), and we have
\begin{equation}\label{e}
\lim_{\epsilon\to 0} \frac
{E(Z_n-Z_0)^2}{\sum_{i=1}^n E(Z_i-Z_{i-1})^2}=n.
\end{equation}
\end{proposition}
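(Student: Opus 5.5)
Since \(X_i\) is uniform on the circle and translation mod \(2\pi\) preserves Lebesgue measure, \(Y_i = X_i + 2\epsilon\) is again uniform, so \(X_i\stackrel{d}{=}Y_i\). The plan is then to compute both sides of \eqref{e} exactly, using the fact that \(f\) depends only on the sum of its arguments. Adding multiples of \(2\pi\) does not change \(\sin\), so the reduction mod \(2\pi\) can be ignored. Set \(S=X_1+\cdots+X_n\). Then
\[
Z_i=\sin\bigl(S+2(n-i)\epsilon\bigr),
\]
which is already a deterministic function of \(S\) and \(\epsilon\). This shows that the \(Z_i\) depend on \(\epsilon\).

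Next we evaluate the squared differences with the identity \(\sin a-\sin b=2\cos\frac{a+b}{2}\sin\frac{a-b}{2}\). This gives
\[
Z_n-Z_0=-2\cos(S+n\epsilon)\sin(n\epsilon),\qquad
Z_i-Z_{i-1}=-2\cos\bigl(S+(2(n-i)+1)\epsilon\bigr)\sin\epsilon .
\]
We then need the law of \(S\) modulo \(2\pi\). Since \(X_1\) is uniform on the circle and independent of \(X_2,\dots,X_n\), the sum \(S\) is uniform mod \(2\pi\). Consequently \(E\cos^2(S+c)=1/2\) for every constant \(c\). It follows that
\[
E(Z_n-Z_0)^2=2\sin^2(n\epsilon),\qquad \sum_{i=1}^n E(Z_i-Z_{i-1})^2=2n\sin^2\epsilon .
\]

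The ratio in \eqref{e} is therefore \(\sin^2(n\epsilon)/(n\sin^2\epsilon)\) for every \(\epsilon\notin\pi\mathbb Z\). As \(\epsilon\to0\) this ratio tends to \(n^2\epsilon^2/(n\epsilon^2)=n\), which proves \eqref{e}.

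No step here is a genuine obstacle. The only points that need care are the uniformity of \(S\) mod \(2\pi\), which makes the \(\cos^2\) averages exactly \(1/2\) regardless of the phase shift, and the remark that \(f\) is \(2\pi\)-periodic in each variable, so that the mod operation can be dropped. The value \(n\) should also be compared with the Cauchy--Schwarz bound: since \(Z_n-Z_0=\sum_i (Z_i-Z_{i-1})\), one always has \(E(Z_n-Z_0)^2\le n\sum_i E(Z_i-Z_{i-1})^2\). This shows the limit is the extremal possible value, which is the sharpness claimed in the introduction.
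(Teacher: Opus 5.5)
Your proof is correct and follows the paper's argument: you write \(Z_i=\sin(S+2(n-i)\epsilon)\), apply the sum-to-product identity together with the uniformity of \(S\) mod \(2\pi\) to get \(E(Z_n-Z_0)^2=2\sin^2(n\epsilon)\) and \(E(Z_i-Z_{i-1})^2=2\sin^2\epsilon\), and then let \(\epsilon\to0\) in \(\sin^2(n\epsilon)/(n\sin^2\epsilon)\). You are slightly more explicit than the paper on two points: why \(E\cos^2(S+c)=1/2\) for every phase \(c\), and why the reduction mod \(2\pi\) can be dropped.
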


This is as bad as possible, since for arbitrary random variables \(Z_i\) by Cauchy-Schwarz applied to the sequences \(Z_{i+1}-Z_i\) and \(1\),
\begin{equation}\label{e:CS}
E(Z_n-Z_0)^2\le n\sum_{i=1}^nE(Z_i-Z_{i-1})^2.
\end{equation}
\begin{proof}
With \(S=X_1+\ldots +X_n\) we have
$$
Z_0-Z_n=\sin(S+2n\epsilon)-\sin(S)=2\cos(S+n\epsilon)\sin(n\epsilon),
$$
$$
E(Z_0-Z_n)^2=2\sin^2(n\epsilon).
$$
Similarly,
$$
E(Z_i-Z_{i-1})^2=2\sin^2(\epsilon).
$$
So \eqref{e} equals to
\[
\lim_{\epsilon\to 0}\frac{\sin^2(n\epsilon)}{n\sin^2(\epsilon)}=n.
\qedhere\]
\end{proof}

\section*{Finite support case}

Inspired by the counterexample of Proposition \ref{p:id}, we may ask if a useful bound still exists when \(X_i\) can take only \(k_i\) values. For \(k\in\mathbb{Z}^n,\) the constant factor we get is given by
$$
\rho(k)=\max_{u,m}\frac{\sin^2(\sum_{i} \pi u_i/m_i)}{\sum_{i}\sin^2(\pi u_i/m_i)},
$$
where the maximum is over all \(m, u\in \mathbb Z^n\), satisfying \(1\le m_i\le k_i\), and \(0\le u_i<m_i\) for all \(i\).

We can bound this quantity as follows.
\begin{lemma}
    \(\rho(k)\le \max_i k_i/{2}\).
\end{lemma}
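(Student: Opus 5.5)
The plan is to bound the ratio defining \(\rho(k)\) directly, for each admissible pair \((m,u)\), by combining two elementary estimates: one that works well when few angles are nonzero, and one that works well when many are. Write \(\theta_i=\pi u_i/m_i\) and \(K=\max_i k_i\). Coordinates with \(u_i=0\) have \(\theta_i=0\), so they contribute nothing to the sum inside the numerator and nothing to the denominator, and I discard them. Let \(J\) be the set of remaining indices and \(j=|J|\). If \(j=0\) the ratio is \(0/0\) and is excluded (or taken as \(0\)). So assume \(j\ge1\), which forces \(K\ge2\).

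\emph{First estimate (few nonzero angles).} Iterating \(|\sin(a+b)|\le|\sin a|+|\sin b|\) gives \(|\sin(\sum_{i\in J}\theta_i)|\le\sum_{i\in J}|\sin\theta_i|\). Cauchy--Schwarz, exactly as in \eqref{e:CS}, then yields
\[
\sin^2\Bigl(\sum_i\theta_i\Bigr)\le j\sum_{i\in J}\sin^2\theta_i .
\]
Hence the ratio is at most \(j\).

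\emph{Second estimate (many nonzero angles).} For \(i\in J\) we have \(1\le u_i\le m_i-1\), so \(\theta_i\in[\pi/m_i,\pi-\pi/m_i]\). Therefore \(\sin^2\theta_i\ge\sin^2(\pi/m_i)\ge\sin^2(\pi/K)\), using \(2\le m_i\le k_i\le K\). Bounding the numerator by \(1\), the ratio is at most \(1/(j\sin^2(\pi/K))\).

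\emph{Combining.} If \(j\le K/2\), the first estimate already gives \(\rho\le K/2\). If \(j>K/2\), the second estimate gives a ratio below \(2/(K\sin^2(\pi/K))\). It then suffices to check \(\sin^2(\pi/K)\ge4/K^2\), that is, \(\sin(\pi/K)\ge 2/K\). This is Jordan's inequality \(\sin x\ge 2x/\pi\) on \([0,\pi/2]\), applied at \(x=\pi/K\le\pi/2\).

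The only delicate point is recognising that neither estimate alone suffices. The crude lower bound \(\sin^2(\pi/K)\approx\pi^2/K^2\) is far too weak when \(j=2\), and the factor \(j\) is too weak when \(j\) is large. The threshold \(j=K/2\) balances the two, and Jordan's inequality closes the gap with exactly the constant \(K/2\).
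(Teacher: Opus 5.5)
Your proposal is correct and follows essentially the same argument as the paper: the bound by the number of nonzero angles via \(|\sin(\sum x_i)|\le\sum|\sin x_i|\) and Cauchy--Schwarz, the bound \(\kappa^2/(4\ell)\) from \(\sin(\pi u_i/m_i)\ge\sin(\pi/\kappa)\ge 2/\kappa\), and then \(\ell\wedge \kappa^2/(4\ell)\le\kappa/2\). Your explicit handling of the degenerate case where all \(u_i=0\) and of the threshold \(j=K/2\) just spells out details the paper leaves implicit.
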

\begin{proof}
Let \(\ell=|\{i:u_i\neq 0\}|\) and \(\kappa=\max\limits_{i}k_i.\) Using \(\left\lvert\sin\left(\sum\limits_{i=1}^{\ell}x_i\right)\right\rvert\leq \sum\limits_{i=1}^\ell\left\lvert\sin(x_i)\right\rvert\) and Cauchy-Schwarz, we get \( \rho(k)\leq \ell\). For \(u_i\neq 0\),
\[
\sin\left(\frac{\pi u_i}{m_i}\right)\geq\sin\left(\frac{\pi }{m_i}\right)\geq \sin\left(\frac{\pi}{\kappa}\right)\geq \frac{2}{\kappa}.
\] 
This gives \(\rho(k)\leq \frac{\kappa^2}{4\ell}\). Hence
\[
\rho(k)\leq \ell\wedge \left(\frac{\kappa^2}{4\ell}\right)\leq \frac{\kappa}{2}.
\qedhere
\]
\end{proof}

\begin{theorem}\label{thm: finite support}
For each \(i\) let \((X_i,Y_i)\) be an identically distributed pair, so that the pairs \((X_i,Y_i)\) are independent as \(i\) varies, and \(X_i\) can take at most \(k_i\) values. Let \(f\) be a bounded measurable function, and let \(Z_i=f(X_1,\ldots X_{i},Y_{i+1},\ldots Y_n)\). Then
$$
E(Z_n-Z_0)^2\le \rho(k) \sum_{i=1}^n E(Z_i-Z_{i-1})^2, \qquad \rho (k)\le \max_i k_i/2.
$$
\end{theorem}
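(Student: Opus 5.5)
The plan follows the strategy announced in the introduction: decompose each joint law into cycles, then run Fourier analysis on a product of cyclic groups.

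\textbf{Step 1: reduction to cycles.} Fix \(i\) and let \(X_i\) take values in a finite set \(S_i\) with \(|S_i|\le k_i\). The joint law \(\pi_i\) of \((X_i,Y_i)\) is a nonnegative matrix on \(S_i\times S_i\) whose row and column marginals agree, because \(X_i\stackrel{d}{=}Y_i\). Viewed as a flow on the complete directed graph on \(S_i\), it is therefore a circulation: inflow equals outflow at every vertex.

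The circulations of total mass one form a compact convex polytope. By Krein--Milman its extreme points are the normalized simple directed cycles (including loops), and a cycle on \(S_i\) has length \(m_i\le k_i\). Hence \(\pi_i=\sum_c \lambda_c \pi^{(c)}\), where \(\pi^{(c)}\) is uniform on the edges \((a_0,a_1),\dots,(a_{m-1},a_0)\) of a simple cycle. A loop is the case \(m=1\).

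\textbf{Step 2: linearity in each coordinate.} Both sides of the inequality are linear in each \(\pi_i\) separately, once the other pairs are held fixed. This is because each \(Z_j\) depends on the \(i\)-th pair only through \((X_i,Y_i)\), and the expectations are integrals against the product measure \(\otimes_i\pi_i\). We expand the product of mixtures and apply the inequality to each product of cycle measures. It therefore suffices to treat the case where each \((X_i,Y_i)\) is uniform on the edges of one cycle of length \(m_i\le k_i\).

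In that case we identify the cycle with \(\mathbb Z_{m_i}\). Then \(X_i\) is uniform on \(\mathbb Z_{m_i}\) and \(Y_i=X_i+1\), and \(f\) becomes a function on \(G=\prod_i \mathbb Z_{m_i}\). Vertices of \(S_i\) outside the cycle are never visited, so they play no role.

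\textbf{Step 3: Fourier computation.} Write \(f=\sum_{u}\hat f(u)\chi_u\) with characters \(\chi_u(x)=\exp\bigl(2\pi \mathrm i\sum_j u_jx_j/m_j\bigr)\) and \(0\le u_j<m_j\). For \(f\) complex-valued we work with \(E|\cdot|^2\), and the real case follows.

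Translating coordinate \(j\) by one multiplies \(\chi_u\) by \(e^{2\pi \mathrm i u_j/m_j}\). Since \(Z_j-Z_{j-1}\) has the law of \(f(x)-f(x+e_j)\) with \(x\) uniform on \(G\), Parseval gives
\[
E|Z_j-Z_{j-1}|^2=\sum_u|\hat f(u)|^2\,4\sin^2(\pi u_j/m_j).
\]
Similarly \(Z_n-Z_0\) has the law of \(f(x)-f(x+\mathbf 1)\), so
\[
E|Z_n-Z_0|^2=\sum_u |\hat f(u)|^2\,4\sin^2\Bigl(\sum_j\pi u_j/m_j\Bigr).
\]
Comparing the two expressions termwise, each \(u\) contributes with ratio at most \(\rho(k)\), by the definition of \(\rho\) with \(m_j\le k_j\). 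Hence the inequality holds with constant \(\rho(k)\).

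If the denominator \(\sum_j\sin^2(\pi u_j/m_j)\) vanishes, then every \(u_j=0\). In that case the numerator also vanishes, so such terms contribute nothing to either side. The bound \(\rho(k)\le\max_i k_i/2\) is the preceding Lemma.

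\textbf{Main obstacle.} The delicate step is Step 1: justifying that the extreme points of the circulation polytope are exactly the normalized simple cycles. The plan is to do this directly, with the standard argument by which any nonzero circulation contains a simple cycle in its support. Subtracting the largest multiple of that cycle that keeps all entries nonnegative removes at least one edge from the support, and induction on the size of the support finishes the decomposition. This finite argument avoids invoking Krein--Milman abstractly.

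The remaining points need only a short check. Simple cycles have length at most \(|S_i|\le k_i\). The product-measure multilinearity in Step 2 is justified because \(f\) is bounded, so all the expectations involved are finite.
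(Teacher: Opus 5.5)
Your proposal is correct and follows essentially the same route as the paper. You view each joint law as a circulation and split it into normalized cycle flows of length at most \(k_i\), use that both sides are linear in each pair's law, and reduce by Fourier analysis on \(\prod_j \mathbb Z_{m_j}\) to the termwise inequality defining \(\rho(k)\), with \(\rho(k)\le \max_i k_i/2\) coming from the Lemma. The explicit cycle-peeling argument in place of the paper's bare appeal to Krein--Milman, and the separate treatment of the \(u=0\) term, are minor tightenings of the paper's sketch rather than a different approach.
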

\begin{proof}
It helps to allow complex-valued \(f\), in which case the result applies to the squared modulus:
$$
E|Z_n-Z_0|^2\le \frac{\kappa}{2}\sum_{j=1}^n E|Z_j-Z_{j-1}|^2
$$
First we consider the special case when all \(X_j\) is uniform on \(\mathbb Z_{k_j}=\{0,\ldots,k_j-1\}\), and \(Y_j=X_{j}+1\) mod \(k_j\). Then \(\{e^{2\pi i v X_j/k_j}, v\in \mathbb Z_{k_j}\}\) is an orthonormal basis for the law of \(X_j\), and so
\(\{\chi_u=\prod_j e^{2\pi iu_j X_j/k_j}\ :\,u\in \prod_j \mathbb Z_{k_j}=\mathcal U\}\) forms an orthonormal basis for the joint law of the \(X_j\) over the complex numbers. So we can write
$$f(X_1,\ldots, X_n)=\sum_{u\in \mathcal U}c_u\chi_u$$ for some coefficients \(c_u\). We also have
$$f(Y_1,\ldots, Y_n)=\sum_{u\in \mathcal U}\eta_uc_u\chi_u,$$
where \(\eta(u)=\prod_j \eta_j^{u_j}\), \(\eta_j=\exp(2\pi/k_j)\). We have
$$
Z_n-Z_0=\sum_{u\in \mathcal U} (1-\eta_u)c_u\chi_u.
$$
Similarly, we have
$$
Z_{j}-Z_{j-1}\stackrel{d}{=}f(X_1,\ldots,X_n)-f(X_1,\ldots,X_{j-1}, Y_{j},X_{j+1},\ldots X_n)=\sum_{u\in \mathcal U}(1-\eta_j^{u_j})c_u\chi_u.
$$
Squaring, taking expectations and using orthonormality, the desired inequality reads
$$
\sum _{u\in \mathcal U}|c_u|^2|1-\prod_j\eta_j^{u_j}|^2\le \rho(k)\sum _{u\in \mathcal U}|c_u|^2\sum_j|1-\eta_j^{u_j}|^2,
$$
which follows by summing \(|c_u|^2\) times the inequality
$$
|1-\prod_j\eta_j^{u_j}|^2\le \rho(k)\sum_j|1-\eta_j^{u_j}|^2,
$$
which translates to
$$
\sin^2(\sum_{j} \pi u_j/k_j)\le \rho(k)\sum_{j}\sin^2(\pi u_j/k_j)
$$
holding by definition of \(\rho(k)\).

For general identically distributed pairs \((X_j,Y_j)\) on \([k_j]\), we note that the joint mass function \(p_j(x,y)\) defines a sourceless flow on \([k_j]\). By the Krein-Milman theorem, the flow can be decomposed as a convex combination of unit directed flows over cycles. Since the desired inequality is linear in each of the joint laws \((X_j,Y_j)\) (but not in \(f\)), it suffices to show it for each component. Denoting the cycle lengths by \(k_j'\), and relabeling, this is exactly what was shown in the first part of the proof.
\end{proof}

\begin{remark}
It follows from the proof that the bound \(\rho(k)\) can be replaced by
\[
\max_i\bigl(\text{maximal cycle length in the support of }(X_i,Y_i)\bigr)/2.
\]
In particular, when each pair \((X_i,Y_i)\) is exchangeable the maximal cycle length is at most 2 and one recovers the classical Efron--Stein constant 1 (Theorem~\ref{thm: Efron-Stein for exchangeable pair}). When the pairs are independent the same conclusion holds.
\end{remark}

\section*{Different law for \(X_i,Y_i\)}

If \(X_i\) is allowed to have different law from \(Y_i\), we cannot improve on the Cauchy-Schwarz bound \eqref{e:CS} even if we require only three possible values for \(X_i,Y_i\) and \(EZ_i=0\). For this, we set \(Y_i=0\), \(X_i\) to be independent and uniform on \(\{-1,1\}\), and 
$$
f(x_1,\ldots,x_n)=x_1(|x_1|+\ldots+|x_n|).
$$
Then \(Z_i=iX_1\), \(EZ_i=0\), and
$$
(Z_0-Z_n)^2=n^2, \qquad \sum_{i=1}^n
(Z_{i}-Z_{i-1})^2=n.
$$

\begin{remark}
There are several generalizations and versions of the Efron-Stein inequality, including the OSSS inequality \cite{o2005every}.
\end{remark}

\noindent {\bf Acknowledgments.} We thank G\'abor Pete for some useful comments on a previous version.

\bibliographystyle{dcu}
\bibliography{shape}
\end{document}